\documentclass[11pt,reqno]{amsart}
\usepackage{amsfonts}
\usepackage{ifthen}
\usepackage{amsthm}
\usepackage{amsmath}
\usepackage{graphicx}
\usepackage{caption}
\usepackage{subcaption}
\usepackage{amscd,amssymb}
\usepackage{color}
\usepackage{hyperref}
\usepackage{array,multirow,makecell}

\usepackage{cite,epstopdf}

\setcellgapes{1pt}
\makegapedcells
\newcolumntype{R}[1]{>{\raggedleft\arraybackslash }b{#1}}
\newcolumntype{L}[1]{>{\raggedright\arraybackslash }b{#1}}
\newcolumntype{C}[1]{>{\centering\arraybackslash }b{#1}}
\newcounter{minutes}
\divide\time by 60
\newcounter{hours}
\multiply\time by 60 \addtocounter{minutes}{-\time}

\newtheorem{theorem}{Theorem}
\newtheorem{lemma}{Lemma}

\newtheorem{corollary}{Corollary}
\newtheorem{remark}{Remark}
\newtheorem{example}{Example}

\numberwithin{equation}{section}

\title[Subordination Associated with Laguerre polynomial]{Subordination Associated with Laguerre polynomial}

\author[A. Kumar]
{Anish Kumar}

\address{{\bf Anish Kumar}\newline
Department of Mathematics,
Dr. Shyama Prasad Mukherjee University ,\newline
Ranchi 834008, Jharkhand, India}
\email{ak8107690@gmail.com}

\keywords{Convex functions, starlike functions, Laguerre polynomial}
\subjclass[2020]{30D15; 30C45; 30H10}
\begin{document}

\begin{abstract}
In this work, we have considered the Laguerre polynomial. This polynomial has been studied in several branches of theoretical physics and applied Mathematics. J. K. Prajapat at.al derived condition so that Laguerre polynomial satisfy convexity, strong starlikeness, close-to-convexity and strongly convexity. In this article, characteristics properties such as exponential subordination have been studied.  Moreover Janowski starlikeness and convexity have been investigated for this polynomial. Several examples and corollaries have been mentioned to validates the result. 
\end{abstract}
\maketitle

\section{Introduction and Motivation}
Let $\mathcal A$ denote the family of normalized analytic functions in the unit disk $\mathbb D=\{z\in\mathbb C:|z|<1\}$ satisfying
$
F(0)=0 \quad \text{and} \quad F'(0)=1.
$
A function $F\in\mathcal A$ is said to be univalent in $\mathbb D$ if it is one-to-one there. In geometric function theory, the quantities
$
\frac{zF'(z)}{F(z)}
$
and
$
1+\frac{zF''(z)}{F'(z)}
$
are fundamental in characterizing starlikeness and convexity, respectively. In particular, when either of these quantities lies in the domain
$
|\log U|<1,
$
associated with the exponential mapping, one obtains subclasses related to exponential starlikeness and exponential convexity.

Subordination provides a natural framework for studying these geometric properties. For analytic functions $g$ and $h$ in $\mathbb D$, one says that $g$ is subordinate to $h$, written $g\prec h$, if there exists a Schwarz function $w$, analytic in $\mathbb D$ with $w(0)=0$ and $|w(z)|<1$, such that
$
g(z)=h(w(z)), \quad z\in\mathbb D.
$
Equivalently, when $h$ is univalent,
$
g(0)=h(0)
$
and
$
g(\mathbb D)\subset h(\mathbb D).
$

Let $\mathcal P_e$ denote the class of analytic functions $p$ in $\mathbb D$ satisfying
$
p(0)=1
$
and
$
p(z)\prec e^z.
$
Then a function $F\in\mathcal A$ is called exponentially starlike if
$
\frac{zF'(z)}{F(z)}\in \mathcal P_e,
$
and exponentially convex if
$
1+\frac{zF''(z)}{F'(z)}\in \mathcal P_e.
$
These classes are denoted by $\mathcal S_e^*$ and $\mathcal K_e$, respectively.

Motivated by a unified treatment of geometric subclasses, Ma and Minda introduced general subclasses of starlike and convex functions, including the classical families

$$\mathcal{S}^*=\left\{F\in \mathcal{A}: 0<  \Re\left(\frac{zF^\prime(z)}{F(z)}\right) \quad
 \forall \; z\in\mathbb{D}\right\},$$
$$\mathcal{K}=\left\{F\in \mathcal{A}:  0<\Re\left(1+\frac{zF^{\prime\prime}(z)}{F^{\prime}(z)}\right)\quad
\mbox{for all} \; z\in\mathbb{D}\right\}.$$ 

Another important subclass arises from the Bernoulli lemniscate. A function is called lemniscate starlike whenever
$
\frac{zF'(z)}{F(z)} \prec \sqrt{1+z},
$
while lemniscate convexity is characterized by
$
1+\frac{zF''(z)}{F'(z)} \prec \sqrt{1+z}.
$

For parameters satisfying
$
-1\le D<C\le 1,
$
let $P(C,D)$ denote the family of analytic functions
$
p(z)=1+c_1z+c_2z^2+\cdots
$
such that
$
p(z)\prec \frac{1+Cz}{1+Dz}.
$
In particular, for $0\le \beta<1$, the choice
$
P(1-2\beta,-1)
$
corresponds to functions satisfying
$
\Re(p(z))>\beta.
$

A function $F\in\mathcal A$ belongs to the Janowski starlike class $S^*(C,D)$ if
$
\frac{zF'(z)}{F(z)}\in P(C,D).
$
These subclasses provide a broad framework for investigating geometric properties through differential subordination and form the foundation for the results developed in this article.

The extended  Sonine polynomials or Laguerre polynomials  of degree n and  parameter $\alpha \in \mathbb{R}$ are given using Rodrigues formulae \\
$$L_n^{(\alpha)}(z)= \frac{z^{-\alpha}e^z}{n!}\left(\frac{d}{dz}\right)^n(z^{n+\alpha}e^{-z}).$$

They hold the condintion of secnd order differential equation,
\begin{equation}\label{eq1}
zL^{\prime\prime}+(\alpha+1-z)L'(z)+nL(z)=0.
\end{equation}
Particularly using $\alpha=0$ is the Laguerre polynomial $L_n(z)$ which has been discussed in article. For $-1<\alpha$ and $n\in 0\cup \mathbb{N}$, the extended Laguerre polynomial $L_n(\alpha)(z)$ can be expressed as \\
\begin{align}
   L_n(\alpha)(z) &=\sum_{k=0}^{n}(-1)^k\binom{n+\alpha}{n-k}\frac{z^k}{k!}\\
   &=\sum_{k=0}^{n}\frac{(-1)^k (1+\alpha)_n}{(1+\alpha)_k(n-k)!}\frac{z^k}{k!};
\end{align}
where $(z)_n=\frac{\Gamma(z+n)}{\Gamma(z)}$, $n\in {0} \cup \mathbb{N}$ is a pochhamer symbol. \\

The Laguerre polynomials are vital in applied and theoretical physics as well as Mathematics. They arise in quantum mechanics, particularly in the radial component of the solution to the Schrödinger equation for a one-electron atom.
 They represent the static winger function of oscillator systems and quantum mechanics too. The Laguerre polynomials not reducible, one may refer to (cited there in). 

The analytical investigation of the Laguerre polynomials have been broadly done. Never the less its behaviour as an analytic functions has not explored much. The geometric behaviour of special functions were explored several researcher such discussions are apperared for the bessel function \cite{Baricz, Arpad}, hypergeometric function \cite{hasto,ssm}, Wright function \cite{baricz}, Mittag-Leffler function \cite{Raina, bansal}, Coulumb wave function \cite{mondal}, incomplete beta function \cite{anbhu}, stuve and Lommel function \cite{baricz}.

For $-1<\alpha $ and $n \in   {0} \cup \mathbb{N}$, the normalize Laguerre polynomials are defined as\\
$$M_{n, \alpha}(z)=\frac{n!}{(1+\alpha)_n}L_n^{\alpha}(z)=\sum_{k=0}^n \frac{(-1)^k n!}{(1+\alpha)_k (n-k)!}\frac{z^k}{k!},$$ Which hold differential equation too. If we put $n=0,1,2,3, \dots$ we can get the some terms of $M_{n,\alpha}$;
\begin{align*}
 & M_{0, \alpha} (z)=1 , M_{1,\alpha}(z)=1-\frac{z}{1+\alpha}\\
 &M_{2,\alpha}(z)=1-\frac{2z}{1+\alpha}+\frac{z^2}{(1+\alpha)(2+\alpha)}\\
 & M_{3,\alpha}=1-\frac{3z}{1+\alpha} +\frac{3z^2}{(1+\alpha)(2+\alpha)}-\frac{z^3}{(1+\alpha)(2+\alpha)(3+\alpha)}.
\end{align*}
This polynomial also related to special functions like special functions like special function $J_{\alpha}(z)$, Whitteker function $W_{a,b}(z)$, hypergeometric function and Hermite polynomial.

\begin{align*}
    &M_{n, \alpha}(z)=\phi(-n;\alpha+1;z)\\
    &M_{n,\alpha}(z)=z^{\frac{-1}{2}(\alpha+1)}e^\frac{z}{2}W_{n+\frac{1}{2},\frac{\alpha}{2}}(z),\\
    &\sum_{0}^{\infty}M_{n,\alpha}(z)\frac{t^n}{n!}=\Gamma(1+\alpha)(zt)^{\frac{-\alpha}{2}e^t J_{\alpha}(2\sqrt zt)}\\
    &M_{n,-1/2}(z^2)=\frac{(-1)^n n!}{2n!}H_{2n}(z).
\end{align*}
In addition, we can find that $$M_{n,\alpha}^{\prime}(z)=-\frac{n}{(1+\alpha)}M_{n-1, \alpha+1}(z).$$


In order to show the main findings, we need lemmas, which are as follows:

\section{Useful Lemmas}\label{sec1}
  Some Lemmas have been recollected in the below section, which will be useful to show our main results. 
\begin{lemma}\label{lem1}\rm{\cite{Naz}}
Let $\Omega\subset \mathbb C$ and let
$
\psi:\mathbb C^3\times \mathbb D \to \mathbb C
$
be such that
$
\psi(r,s,t;z)\notin \Omega
$
whenever
$
r=e^{e^{i\theta}},
$
$
s=me^{i\theta}e^{e^{i\theta}},
$
and
$
\Re\left((s+t)e^{-i\theta}e^{-e^{i\theta}}\right)\ge 0,
$
for $\theta\in[0,2\pi)$, $z\in\mathbb D$, and $m\ge 1$. If $q$ is analytic in $\mathbb D$ with $q(0)=1$ and
$
\psi(q(z),zq'(z),z^2q''(z);z)\in \Omega,
$
for all $z\in\mathbb D$, then
$
q\in \mathcal P_e.
$
\end{lemma}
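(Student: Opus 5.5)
The plan is to follow the standard Miller--Mocanu admissibility argument, specialised to the superordinate function $h(z)=e^z$. First we check that $h$ is a valid dominant. It is analytic and univalent on $\overline{\mathbb D}$: if $e^{z_1}=e^{z_2}$ with $|z_1|,|z_2|\le 1$, then $z_1-z_2\in 2\pi i\mathbb Z$. But $|\Im(z_1-z_2)|\le 2<2\pi$, so $z_1=z_2$. Moreover $h'(z)=e^z\neq 0$ on $\partial\mathbb D$, and $h(0)=1=q(0)$. Hence $q\in\mathcal P_e$ is equivalent to $q\prec h$.

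We argue by contradiction and suppose $q\not\prec h$. By the fundamental lemma of Miller and Mocanu (Jack's lemma in subordination form), there exist $z_0\in\mathbb D$, $\zeta_0=e^{i\theta}\in\partial\mathbb D$ and a real number $m\ge 1$ with
$$q(z_0)=h(\zeta_0),\qquad z_0q'(z_0)=m\zeta_0h'(\zeta_0),\qquad \Re\left(1+\frac{z_0q''(z_0)}{q'(z_0)}\right)\ge m\,\Re\left(1+\frac{\zeta_0h''(\zeta_0)}{h'(\zeta_0)}\right).$$
For $h=e^z$ these become concrete. The first two relations read $r:=q(z_0)=e^{e^{i\theta}}$ and $s:=z_0q'(z_0)=me^{i\theta}e^{e^{i\theta}}$. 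Since $h''/h'\equiv 1$, the third relation gives
$$\Re\left(1+\frac{t}{s}\right)\ge m(1+\cos\theta)\ge 0,\qquad t:=z_0^2q''(z_0).$$

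Next we convert this into the stated condition on $s+t$. Write $s+t=s(1+t/s)$ and note that $se^{-i\theta}e^{-e^{i\theta}}=m$. Then
$$\Re\left((s+t)e^{-i\theta}e^{-e^{i\theta}}\right)=m\,\Re\left(1+\frac{t}{s}\right)\ge m^2(1+\cos\theta)\ge 0 .$$
So the triple $(r,s,t)$ at the point $z_0$ satisfies exactly the hypotheses of the lemma. By assumption this forces $\psi(r,s,t;z_0)\notin\Omega$. On the other hand, $\psi(q(z_0),z_0q'(z_0),z_0^2q''(z_0);z_0)\in\Omega$, which is a contradiction. Therefore $q\prec e^z$, that is, $q\in\mathcal P_e$.

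The only step needing care is the invocation of the Miller--Mocanu lemma. The form used above needs $h$ univalent on $\overline{\mathbb D}$ with $h'\ne0$ on the boundary, and $q$ nonconstant; if $q\equiv1$ the conclusion is trivial. If one prefers to avoid boundary regularity issues, the standard device is to apply the argument to $h_\rho(z)=h(\rho z)$ with $\rho\to1^-$ and pass to the limit. Here, however, the entire function $e^z$ is already univalent on a neighbourhood of $\overline{\mathbb D}$, so the direct application suffices. The rest is the short computation above.
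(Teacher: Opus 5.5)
Your argument is correct and is essentially the standard one. The paper gives no proof of its own and only cites Naz et al., whose lemma is the specialization of the Miller--Mocanu admissibility theorem (via the fundamental lemma) to the dominant $h(z)=e^z$, which is exactly what you carry out. Your computation $\Re\bigl((s+t)e^{-i\theta}e^{-e^{i\theta}}\bigr)=m\,\Re(1+t/s)\ge m^2(1+\cos\theta)\ge 0$ also correctly explains why the weaker condition $\ge 0$ stated in the paper still suffices: it only enlarges the set on which $\psi\notin\Omega$ must be checked.
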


It is worth observing that the admissibility requirement in Lemma \ref{lem1} is verified whenever
$
\psi(r,s,t;z)\notin \Omega
$
for
$
r=e^{e^{i\theta}},
$
$
s=me^{i\theta}e^{e^{i\theta}},
$
and
$
\Re\left((s+t)e^{-i\theta}e^{-e^{i\theta}}\right)\ge 0,
$
where $\theta\in[0,2\pi)$, $z\in\mathbb D$, and $m\ge1$.

In the special case
$
\psi:\mathbb C^2\times\mathbb D\to\mathbb C,
$
the admissibility condition reduces to
$
\psi(e^{e^{i\theta}},me^{i\theta}e^{e^{i\theta}};z)\notin\Omega,
$
where $z\in\mathbb D$, $\theta\in[0,2\pi)$, and $m\ge1$.

\begin{lemma}\label{lem2}\rm{\cite{noreen}}
Suppose $\Omega\subset\mathbb C$ and let
$
\psi:\mathbb C^3\times\mathbb D\to\mathbb C
$
satisfy
$
\psi(i\rho,\sigma,\mu+iv;z)\notin\Omega
$
whenever $z\in\mathbb D$, $\rho\in\mathbb R$,
$
\sigma\le -\frac{1+\rho^2}{2},
$
and
$
\sigma+\mu\le0.
$
If $q$ is analytic in $\mathbb D$ with $q(0)=1$ and
$
\psi(q(z),zq'(z),z^2q''(z);z)\in\Omega,
$
then
$
\Re(q(z))>0
$
for all $z\in\mathbb D$.
\end{lemma}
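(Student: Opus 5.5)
The plan is the standard Miller--Mocanu admissibility argument. We argue by contradiction, reduce to a boundary point where $q$ first reaches the imaginary axis, and then use Jack's lemma to produce admissible data $(i\rho,\sigma,\mu+iv)$. Suppose $\Re(q(z))>0$ fails somewhere in $\mathbb D$. Since $q(0)=1$, continuity gives a point $z_0\in\mathbb D$ such that
\[
\Re(q(z))>0 \quad (|z|<|z_0|), \qquad \Re(q(z_0))=0 .
\]
Put $q(z_0)=i\rho$ with $\rho\in\mathbb R$. We may assume $q$ is nonconstant and $q(z)\neq -1$ on $|z|\le |z_0|$; the latter holds because $\Re q\ge 0$ there. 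Define
\[
w(z)=\frac{q(z)-1}{q(z)+1},
\]
which is analytic on $|z|\le|z_0|$ with $w(0)=0$. On the disk $|z|<|z_0|$ we have $|w|<1$, and $|w(z_0)|=1$. Hence $\max_{|z|\le|z_0|}|w(z)|$ is attained at $z_0$.

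Next apply Jack's lemma in its second-order (Miller--Mocanu) form. It gives a real $k\ge1$ with
\[
z_0w'(z_0)=k\,w(z_0), \qquad \Re\left(1+\frac{z_0w''(z_0)}{w'(z_0)}\right)\ge k .
\]
Write $w(z_0)=e^{i\theta}$. From $q=(1+w)/(1-w)$ we get $q(z_0)=i\cot(\theta/2)$, so $\rho=\cot(\theta/2)$.

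Then compute the first derivative:
\[
z_0q'(z_0)=\frac{2k e^{i\theta}}{(1-e^{i\theta})^2}=-\frac{k}{2\sin^2(\theta/2)}=-\frac{k(1+\rho^2)}{2}.
\]
Setting $\sigma:=z_0q'(z_0)$, this is real and satisfies $\sigma\le-\frac{1+\rho^2}{2}$.

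For the second derivative, differentiate $q'=2w'/(1-w)^2$. This gives
\[
\frac{zq''}{q'}=\frac{zw''}{w'}+\frac{2zw'}{1-w},
\]
and therefore
\[
z_0^2q''(z_0)+z_0q'(z_0)=\sigma\left(1+\frac{z_0w''(z_0)}{w'(z_0)}+\frac{2ke^{i\theta}}{1-e^{i\theta}}\right).
\]
Since $\Re\big(2ke^{i\theta}/(1-e^{i\theta})\big)=-k$ and $\sigma<0$, taking real parts gives
\[
\sigma+\mu=\sigma\left(\Re\Big(1+\frac{z_0w''(z_0)}{w'(z_0)}\Big)-k\right)\le0,
\]
where $\mu+iv:=z_0^2q''(z_0)$.

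Thus the triple $(q(z_0),z_0q'(z_0),z_0^2q''(z_0))=(i\rho,\sigma,\mu+iv)$ meets the admissibility conditions of Lemma \ref{lem2}. By hypothesis, $\psi(i\rho,\sigma,\mu+iv;z_0)\notin\Omega$. This contradicts $\psi(q(z_0),z_0q'(z_0),z_0^2q''(z_0);z_0)\in\Omega$, so $\Re q>0$ throughout $\mathbb D$.

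The main obstacle is the second-order step. It needs the sharpened form of Jack's lemma, $\Re(1+z_0w''/w')\ge k$ at an extremal boundary point, together with careful sign bookkeeping, since $\sigma<0$ reverses the inequality. The exceptional case $\theta=0$ (where $w(z_0)=1$) cannot occur, because $q$ is finite at $z_0$.
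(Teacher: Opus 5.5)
Your proof is correct. The paper gives no proof of this lemma at all; it is quoted from the literature as the classical Miller--Mocanu admissibility theorem for the right half-plane. Your argument is precisely the standard proof of that result: pass to the first point $z_0$ where $q$ meets the imaginary axis, use the Cayley transform $w=(q-1)/(q+1)$, and apply the second-order form of Jack's lemma to get $\sigma=z_0q'(z_0)=-k(1+\rho^2)/2$ and $\sigma+\mu=\sigma\bigl(\Re(1+z_0w''(z_0)/w'(z_0))-k\bigr)\le 0$.

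Your computations all check out: $q(z_0)=i\cot(\theta/2)$, $(1-e^{i\theta})^2=-4e^{i\theta}\sin^2(\theta/2)$, and $\Re\bigl(2ke^{i\theta}/(1-e^{i\theta})\bigr)=-k$. The only details worth writing out are two small ones. First, $w$ is analytic on a neighbourhood of $|z|\le|z_0|$ because $q\neq-1$ on that compact set. Second, $w'(z_0)\neq0$ follows from $z_0w'(z_0)=kw(z_0)$ with $k\ge1$.
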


For the reduced case
$
\psi:\mathbb C^2\times\mathbb D\to\mathbb C,
$
the admissibility condition in Lemma \ref{lem2} becomes
$
\psi(i\rho,\sigma;z)\notin\Omega,
$
where $\rho\in\mathbb R$ and
$
\sigma\le -\frac{1+\rho^2}{2}.
$

\begin{lemma}\label{lem3}\rm{\cite{Madaan}}
Assume $q$ belongs to the normalized class of analytic functions and satisfies
$
q(z)\neq 1.
$
Let $\Omega\subset\mathbb C$ and let
$
\psi:\mathbb C^3\times\mathbb D\to\mathbb C
$
be such that
$
\psi(r,s,t;z)\notin\Omega
$
whenever
$
r=\sqrt{2\cos2\theta},e^{i\theta},
$
$
s=\frac{me^{3i\theta}}{2\sqrt{2\cos2\theta}},
$
and
$
\Re\left((s+t)e^{-3i\theta}\right)
\ge
\frac{3m^2}{8\sqrt{2\cos2\theta}},
$
for
$
-\frac{\pi}{4}<\theta<\frac{\pi}{4},
$
$m\ge n\ge1$, and $z\in\mathbb D$.

If
$
(q(z),zq'(z),z^2q''(z);z)\in\mathbb D
$
and
$
\psi(q(z),zq'(z),z^2q''(z);z)\in\Omega,
$
then
$
q(z)\prec \sqrt{1+z}.
$
\end{lemma}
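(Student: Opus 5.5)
The plan is to follow the standard Miller--Mocanu admissibility scheme for the dominant $h(z)=\sqrt{1+z}$, arguing by contradiction. Suppose $q$ is analytic with $q(0)=1=h(0)$, the hypothesis $\psi(q(z),zq'(z),z^2q''(z);z)\in\Omega$ holds, and yet $q\not\prec h$. Since $h$ is univalent on $\mathbb D$, I would invoke the Miller--Mocanu boundary lemma (the generalized Jack lemma). It gives points $z_0\in\mathbb D$ and $\zeta_0\in\partial\mathbb D$, with $\zeta_0$ not one of the exceptional boundary points of $h$, and a number $m\ge n\ge 1$ such that
\begin{align*}
q(z_0)=h(\zeta_0),\qquad z_0q'(z_0)=m\zeta_0h'(\zeta_0),\qquad \Re\Bigl(1+\frac{z_0q''(z_0)}{q'(z_0)}\Bigr)\ge m\,\Re\Bigl(1+\frac{\zeta_0h''(\zeta_0)}{h'(\zeta_0)}\Bigr).
\end{align*}
Here $n$ is the order of the zero of $q-1$ at the origin.

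The second step identifies these boundary quantities with the parametrization in the statement. The image $h(\partial\mathbb D)$ is the right loop of the lemniscate $|w^2-1|=1$, which in polar form is $w=\sqrt{2\cos2\theta}\,e^{i\theta}$ with $-\pi/4<\theta<\pi/4$. Then $w^2-1=2\cos 2\theta\, e^{2i\theta}-1=e^{4i\theta}$, so I take $\zeta_0=e^{4i\theta}$ and obtain $r=q(z_0)=\sqrt{2\cos2\theta}\,e^{i\theta}$. Since $h'(\zeta)=1/(2h(\zeta))$, we get
\begin{align*}
\zeta_0h'(\zeta_0)=\frac{e^{4i\theta}}{2\sqrt{2\cos2\theta}\,e^{i\theta}}=\frac{e^{3i\theta}}{2\sqrt{2\cos2\theta}},
\end{align*}
which gives exactly $s=z_0q'(z_0)=\dfrac{me^{3i\theta}}{2\sqrt{2\cos2\theta}}$.

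For the third quantity, set $t=z_0^2q''(z_0)$, so that $1+z_0q''(z_0)/q'(z_0)=(s+t)/s$. From $h''/h'=-1/(2(1+\zeta))$ we get $1+\zeta h''/h'=(2+\zeta)/(2(1+\zeta))$. On $|\zeta|=1$ with $\zeta\ne-1$, a direct computation using $|1+\zeta|^2=2\Re(1+\zeta)$ gives $\Re\frac{2+\zeta}{1+\zeta}=\frac32$, so this real part equals $\frac34$. Because $s=|s|e^{3i\theta}$ with $|s|>0$, we have $\Re\bigl((s+t)e^{-3i\theta}\bigr)=|s|\,\Re\bigl((s+t)/s\bigr)$. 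Combining this with the inequality from the first step yields
\begin{align*}
\Re\bigl((s+t)e^{-3i\theta}\bigr)\ge |s|\cdot\frac{3m}{4}=\frac{3m^2}{8\sqrt{2\cos2\theta}},
\end{align*}
which is precisely the admissibility constraint in the statement.

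Finally, the admissibility hypothesis says that $\psi(r,s,t;z_0)\notin\Omega$ for all such triples. This contradicts $\psi(q(z_0),z_0q'(z_0),z_0^2q''(z_0);z_0)\in\Omega$, so $q\prec\sqrt{1+z}$.

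The main obstacle is the degenerate boundary point $\zeta=-1$, i.e.\ $\theta=\pm\pi/4$, where $h'$ blows up and the parametrization collapses to the origin. I would handle it by noting that the Miller--Mocanu lemma only requires $h$ to be univalent and smooth on $\overline{\mathbb D}$ minus a finite exceptional set, and that the $\zeta_0$ it produces avoids that set. If needed, I would argue instead with the dilations $h_\rho(z)=h(\rho z)$ and $q_\rho(z)=q(\rho z)$ and let $\rho\to1^-$. This is also why the statement restricts $\theta$ to the open interval $(-\pi/4,\pi/4)$.
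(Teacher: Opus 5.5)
\textbf{Overall.} The paper gives no proof of this lemma; it only quotes it from Madaan et al. Your route is the standard Miller--Mocanu admissibility derivation. The boundary parametrization $\zeta_0=e^{4i\theta}$, the value $\zeta_0h'(\zeta_0)=e^{3i\theta}/(2\sqrt{2\cos2\theta})$, the identity $\Re\bigl(1+\zeta h''(\zeta)/h'(\zeta)\bigr)=\frac34$ on $|\zeta|=1$, and the conversion $\Re\bigl((s+t)e^{-3i\theta}\bigr)=|s|\,\Re\bigl((s+t)/s\bigr)$ are all correct, and they reproduce the stated admissibility conditions exactly.

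\textbf{The gap: the case $\zeta_0=-1$.} Your treatment of this point does not hold as written. The Miller--Mocanu boundary lemma requires the dominant to lie in the class $\mathcal Q$: functions analytic and injective on $\overline{\mathbb D}\setminus E(h)$ with $h'\neq0$ on $\partial\mathbb D\setminus E(h)$, where $E(h)$ is the set of boundary points at which $h\to\infty$. The conclusion $\zeta_0\notin E(h)$ comes for free only because $h(\zeta_0)=q(z_0)$ is finite. For $h(z)=\sqrt{1+z}$ we have $E(h)=\emptyset$, yet $h$ is not analytic at $-1$, where it takes the finite value $0$. So $h\notin\mathcal Q$, and nothing in the lemma prevents the first exit point $q(z_0)$ from being the node $w=0$ of the lemniscate. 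Your dilation fallback does not help either: admissibility is assumed only for the boundary data of $h$, not for those of $h(\rho z)$.

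\textbf{How to close it.} Take $r_0=|z_0|$ with $q(\{|z|<r_0\})\subset h(\mathbb D)$ and $q(z_0)\in\partial h(\mathbb D)$, and set $w=q^2-1$. Then $w$ is analytic in $\mathbb D$, $w(0)=0$ with a zero of order at least $n$, $|w|<1$ for $|z|<r_0$, and $|w(z_0)|=1$. Jack's lemma therefore gives $z_0w'(z_0)=mw(z_0)\neq0$ with $m\ge n$. If $q(z_0)=0$, then $w'(z_0)=2q(z_0)q'(z_0)=0$, a contradiction. Hence $\zeta_0\neq-1$, so $h$ is analytic and univalent near $\zeta_0$ with $h'(\zeta_0)\neq0$. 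The local argument behind the boundary lemma (Jack's lemma applied to $h^{-1}\circ q$) then yields your three relations.

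\textbf{A cleaner route.} You can run the whole proof through $w$ and avoid $\mathcal Q$ entirely. Write $w(z_0)=e^{4i\theta}$ with $|\theta|<\pi/4$. Then:
\begin{itemize}
\item $q(z_0)=\sqrt{2\cos2\theta}\,e^{i\theta}=r$, taking the square root with positive real part;
\item $z_0q'(z_0)=mw(z_0)/(2q(z_0))=s$;
\item $1+z_0w''(z_0)/w'(z_0)=1+z_0q''(z_0)/q'(z_0)+s/r$, with $\Re(s/r)=m/4$.
\end{itemize}
Jack's inequality $\Re\bigl(1+z_0w''(z_0)/w'(z_0)\bigr)\ge m$ thus gives $\Re\bigl(1+z_0q''(z_0)/q'(z_0)\bigr)\ge 3m/4$. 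Multiplying by $|s|$ gives exactly the stated condition $\Re\bigl((s+t)e^{-3i\theta}\bigr)\ge 3m^2/(8\sqrt{2\cos2\theta})$.
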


In this case $\psi:\mathbb{C}^{2}\times \mathbb{D}\rightarrow \mathbb{C}$, the condition in Lemma \ref{lem3}, reduces to $\psi({r,s;z})$ whenever $r=\sqrt{2\cos2\theta}e^{i\theta}, s=\frac{me^{3i\theta}}{2\sqrt{2\cos2\theta}}$, $m\geq n\geq 1$ and $-\frac{\pi}{4}<\theta<\frac{\pi}{4}$ and $z\in \mathbb{D}$.

\section{Exponential Subordination }
In this section main results related to exponential subordination involving Laguerre polynomial has been investigated.
\begin{theorem}\label{thm1}
  Assume that $\alpha >-1$, $n \in \mathbb{N} \cup{0}$ and hold the condition $\frac{1}{e}[\Re (\alpha-1)-n]>0$, then $M_{n,\alpha}(z) \in \mathcal{P}_e$.
\end{theorem}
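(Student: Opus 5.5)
We apply Lemma \ref{lem1} to the second-order linear ODE satisfied by the normalized polynomial. Put $q(z)=M_{n,\alpha}(z)$; then $q$ is analytic in $\mathbb D$ and $q(0)=1$. Since $M_{n,\alpha}$ is a constant multiple of $L_n^{(\alpha)}$, \eqref{eq1} gives
\[
zq''(z)+(\alpha+1-z)q'(z)+nq(z)=0 .
\]
Multiplying by $z$, with $s=zq'$ and $t=z^2q''$, this becomes
\[
t+(\alpha+1-z)s+nzr=0,\qquad r=q(z).
\]
So we take
\[
\psi(r,s,t;z)=t+(\alpha+1-z)s+nzr,\qquad \Omega=\{0\}.
\]
Then $\psi(q(z),zq'(z),z^2q''(z);z)\in\Omega$ for all $z\in\mathbb D$. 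By Lemma \ref{lem1}, it suffices to check the admissibility condition $\psi(r,s,t;z)\neq 0$ whenever $r=e^{e^{i\theta}}$, $s=me^{i\theta}e^{e^{i\theta}}$ with $m\ge1$, and $\Re\big((s+t)e^{-i\theta}e^{-e^{i\theta}}\big)\ge0$.

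To check it, rewrite $\psi=(s+t)+(\alpha-z)s+nzr$ and multiply by the unimodular-times-positive factor $e^{-i\theta}e^{-e^{i\theta}}$. This factor has modulus $e^{-\cos\theta}\neq0$, so $\psi\neq0$ iff
\[
\Phi:=(s+t)e^{-i\theta}e^{-e^{i\theta}}+(\alpha-z)m+nze^{-i\theta}
\]
is nonzero. We show $\Re\Phi>0$:
\begin{itemize}
\item The first term has nonnegative real part by hypothesis.
\item The second term satisfies $\Re((\alpha-z)m)\ge m(\Re\alpha-1)\ge \Re\alpha-1$ when $\Re\alpha-1\ge0$, since $m\ge1$ and $|z|<1$.
\item The third term satisfies $\Re(nze^{-i\theta})\ge -n|z|>-n$.
\end{itemize}
Hence $\Re\Phi>\Re(\alpha-1)-n$, which is positive under the hypothesis $\frac1e[\Re(\alpha-1)-n]>0$. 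Therefore $\psi\notin\Omega$, Lemma \ref{lem1} applies, and $M_{n,\alpha}\in\mathcal P_e$.

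The step needing the most care is the choice of normalization before taking real parts. Rather than estimating $|\psi|$ directly, one must divide by $e^{i\theta}e^{e^{i\theta}}$ so that $s+t$ enters in exactly the combination controlled by Lemma \ref{lem1}. The factor $e^{-\cos\theta}\in[e^{-1},e]$ arising from $|e^{e^{i\theta}}|$ is presumably the source of the $1/e$ in the stated hypothesis. If it is kept rather than divided out, the $r$-term becomes $nz\,e^{e^{i\theta}}$, which is bounded by $n e^{\cos\theta}\le ne$. In that version one bounds the remaining terms against $e^{-1}$ factors, which yields the same kind of sufficient condition. In either form the hypothesis needs only to make $\Re(\alpha-1)$ dominate $n$ with the appropriate constant.
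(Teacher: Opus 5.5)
Your proposal is correct and is essentially the paper's proof: the same $\psi(r,s,t;z)=t+(\alpha+1-z)s+nzr$ with $\Omega=\{0\}$, Lemma~\ref{lem1}, and the same three estimates. These are that $(s+t)e^{-i\theta}e^{-e^{i\theta}}$ has nonnegative real part, that $\Re((\alpha-z)m)\ge\alpha-1$ (valid because the hypothesis forces $\alpha-1>n\ge0$), and that $|nz|<n$. The only difference is cosmetic: the paper uses the reverse triangle inequality to get $|\psi|\ge e^{\cos\theta}\bigl(|(s+t)e^{-i\theta}e^{-e^{i\theta}}+(\alpha-z)m|-|nz|\bigr)>\frac1e[(\alpha-1)-n]$, keeping the factor $e^{\cos\theta}\ge 1/e$, whereas you divide that factor out and take real parts, which shows the $1/e$ plays no role in the positivity.
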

\begin{proof}
   Let $q: \mathbb{D} \rightarrow \mathbb{C}$ be given by $q(z)=M_{n,\alpha}(z).$
   Here $q(z)$ is holomorphic function with $q(0)=1.$ By equation \ref{eq1}, $q(z)$ hold the second order differential equation, we get \\
   $$z^2q^{\prime\prime}(z) +(\alpha+1-z)zq^{\prime}(z)+znq(z)=0.$$
   Let us consider an other function $\psi : \mathbb{C}^3\times \mathbb{D} \rightarrow \mathbb{C}$ by  $$\psi(r,s,t:z)=t+(\alpha+1-z)s+rnz.$$ Assume that $\Delta =0$. Then $\psi(q(z),zq'(z),z^2 q^{\prime\prime}:z) \in \Delta$, $\forall z\in \mathbb{D}.$\\
   Now, we prove that $q(z) \prec e^z$, using lemma \ref{lem1}. We have $s=me^{i\theta}e^{e^{i\theta}}, r=e^{e^{i\theta}} $, $\Re((s+t)e^{-i\theta}e^{-e^{i\theta}})\geq 0$, $z\in \mathbb{D}, \theta \in [0,2\pi), $ and $1\leq m$. Remind triangle inequality $||z_1|-|z_2||\leq |z_1-z_2|$, for all $z_2,z_1 \in \mathbb{C}$ and take into account
   \begin{align*}
      | \psi(r,s,t;z)|&=|e^{e^{i\theta}}||(t+s)e^{-e^{i\theta}}+(\alpha-z)me^{e^{i\theta}}+nz|\\
      &\geq e^{\cos\theta}||(t+s)e^{-i\theta}e^{-e^{i\theta}}+(\alpha-z)m|-|nz||\\
      &> \frac{1}{e}[\Re (\alpha-1)-n].
   \end{align*}
   By given hypothesis $\frac{1}{e}[\Re (\alpha-1)-n]>0$.
 We get the desired result. 
   \end{proof}
   
  \begin{remark}
      After setting particular value of $\alpha$, $n$ and by theorem \ref{thm1}, we obtain following examples which are subordinate to exponential function $e^z$: \begin{example}
           $M_{1,3}(z)=1-\frac{z}{4}$
           \end{example}
\begin{example}
$M_{2,4}(z)=1-\frac{2z}{5}+\frac{z^2}{40}$
\end{example}
\begin{example}
  $ M_{3,5}=1-\frac{3z}{6} +\frac{3z^2}{42}-\frac{z^3}{336}$
      \end{example}
     \begin{example}
      Hypergeometric function $M_{2,4}=\phi(-2,5;z)$.   
     \end{example}
     \begin{example}
         Wright function $M_{3,5}=z^{-3}e^{\frac{z}{2}}W_{6,\frac{5}{2}}(z)$
     \end{example}
     \begin{example}
         Hermite polynomial $M_{4,\frac{-5}{2}}(z^2)=\frac{H_8(z)}{8.7.6.5}$.
     \end{example}
  \end{remark}
  
In the next section, we have obtained sufficient conditions such that  Laguerre polynomial belongs to Janowski convexity. Further we will show that $M_{n,\alpha}(z)$ belong to Janowski starlikeness $S^{*}[C,D]$.
\section{Janowski convexity and starlikeness of Laguerre polynomial}\label{sec3}

\begin{theorem}\label{thm2}
Let $\alpha>-1 ,n \in \mathbb{N}$,\begin{align*}
h_1
&=\Big[-(C-D)(1+D)-(n+1)(1+D)^2\Big]
\Big[-(C-D)(1-D)+(n+1)(1-D)^2\Big]>0,
\end{align*}

\begin{align*}
h_2
&=\Big[(C-D)+(C-D)^2+(C-D)(1+D)(\alpha+1)\Big]
\Big[-(C-D)(1-D)+(n+1)(1-D)^2\Big] \\
&\quad + \Big[-(C-D)(1+D)-(n+1)(1+D)^2\Big]
\Big[(C-D)-(C-D)^2+(C-D)(1-D)(\alpha+1)\Big],
\end{align*}

\begin{align*}
h_3
&=\Big[(C-D)+(C-D)^2+(C-D)(1+D)(\alpha+1)\Big]
\Big[(C-D)-(C-D)^2+(C-D)(1-D)(\alpha+1)\Big].
\end{align*} and $-1\leq D < C \leq 1$. 
Whenever \begin{align*}
    (C-D)(1+D)+(n+1)(1+D)^2 >0,
\end{align*}
inequality hold 
\begin{align}\label{eq4}
2(C-D)+2(C-D)^2 +(C-D)(1+D)\alpha -(n+1)(1+D)^2>0.\end{align}
Whenever \begin{align*}
    (C-D)(1+D)+(n+1)(1+D)^2 <0,
\end{align*}
inequality hold 
\begin{align}\label{eq5}
2(C-D)+2(C-D)^2 +(C-D)(1+D)(\alpha+2) +(n+1)(1+D)^2>0.\end{align}
Whenever $|\frac{-h_2}{2h_1}|<1$, inequality hold \begin{align*}
&\max \Big\{
\big[-(C-D)^2 - D(C-D)(\alpha+2) + (n+1)(1-D^2)\big]^2,\\
&\qquad
\big[(C-D)^2 + D(C-D)\alpha + (n+1)(1-D^2)\big]^2
\Big\}\\
&< h_3-\frac{h_2^{2}}{4h_1}.
\end{align*}
Whenever $2h_1 +h_2 \leq 1$, inequality satisfy 
\begin{align*}
&\max \Big\{
\big[-(C-D)^2 - D(C-D)(\alpha+2) + (n+1)(1-D^2)\big]^2,\\
&\qquad
\big[(C-D)^2 + D(C-D)\alpha + (n+1)(1-D^2)\big]^2
\Big\}\\
&< h_1 +h_2+h_3.
\end{align*}

If $0\notin M_{n,\alpha}'(\mathbb{D})$ and $0\notin M_{n,\alpha}''(\mathbb{D})$, then
$$1+z\frac{M_{n,\alpha}''(z)}{M_{n,\alpha}'(z)} \prec \frac{1+Cz}{1-Dz}.$$
\end{theorem}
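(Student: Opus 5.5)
We argue via the Miller--Mocanu admissibility framework in the form of Lemma \ref{lem2}. With $f=M_{n,\alpha}$, set
\[
p(z)=1+\frac{zf''(z)}{f'(z)},
\]
which is analytic in $\mathbb D$ with $p(0)=1$ because $0\notin f'(\mathbb D)$. The Janowski subordination is equivalent to $\Re q(z)>0$ for the function defined by
\[
p(z)=\frac{(1-C)+(1+C)q(z)}{(1-D)+(1+D)q(z)} ,
\]
after the usual sign normalisation of $D$ in the target $\frac{1+Cz}{1-Dz}$. The point is that $q(z)=\frac{1+w(z)}{1-w(z)}$ with $w$ a Schwarz function. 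So the goal is to build $\psi$ with $\psi(q,zq',z^2q'';z)\in\Omega$ and to check the admissibility condition of Lemma \ref{lem2}.

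\textbf{Step 1: the differential equation for $p$.} Differentiating \eqref{eq1} (for $L=f$) gives
\[
zf'''+(\alpha+2-z)f''-(n-1)f'=0 .
\]
Write $u=f''/f'$, which is analytic since $0\notin f'(\mathbb D)$. Then $zf'''/f'=zu'+zu^2$, so $p=1+zu$ and $zp'=zu+z^2u'$. Substituting turns the third-order equation into the first-order Riccati-type relation
\[
zp'(z)+(p-1)^2+(\alpha+1-z)(p-1)-(n-1)z=0 .
\]
The hypothesis $0\notin f''(\mathbb D)$ ensures $p\neq1$ away from $0$, which will be used when dividing.

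\textbf{Step 2: rewrite in terms of $q$.} Express $p$ and $zp'$ through $q$ and $zq'$:
\[
zp'=\frac{2(C-D)\,zq'}{\bigl((1-D)+(1+D)q\bigr)^2}.
\]
Clearing denominators gives $\psi(q,zq';z)=0$, where $\psi(r,s;z)$ is a polynomial in $r$ of degree $2$ plus a term linear in $s$, with coefficients involving $C-D$, $1\pm D$, $\alpha$ and $n$. Take $\Omega=\{0\}$. By the reduced form of Lemma \ref{lem2}, it suffices to show $\psi(i\rho,\sigma;z)\neq0$ for all real $\rho$, all $\sigma\le-\frac{1+\rho^2}{2}$ and all $z\in\mathbb D$.

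\textbf{Step 3: admissibility.} Substituting $r=i\rho$ and taking real parts, we bound $\Re\psi(i\rho,\sigma;z)$ away from zero. The $\sigma$-term has coefficient $2(C-D)>0$ times a factor whose sign is governed by $(C-D)(1+D)+(n+1)(1+D)^2$. This is where the case split in the statement enters.
\begin{itemize}
\item If that quantity is positive, use $\sigma\le-\frac{1+\rho^2}{2}$ and estimate the $z$-dependent terms by $|z|<1$. The $\rho$-free part is then controlled exactly by \eqref{eq4}.
\item If it is negative, the inequality on $\sigma$ flips, and \eqref{eq5} is the required bound.
\end{itemize}
The remaining $\rho$-dependence produces a quadratic in $t=\rho^2$ (or a quartic in $\rho$) with leading coefficient $h_1>0$, linear coefficient $h_2$ and constant $h_3$. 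We then compare it with the squared modulus of the imaginary / $z$-linear terms, which are bounded by the displayed maxima.

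\textbf{Step 4: minimising the quadratic, the expected main obstacle.} We minimise $h_1t^2+h_2t+h_3$ over the allowed range.
\begin{itemize}
\item If the vertex $-h_2/(2h_1)$ lies in the range (the case $|h_2/(2h_1)|<1$), the minimum is $h_3-\frac{h_2^2}{4h_1}$.
\item Otherwise the extreme is attained at the endpoint $t=1$, giving $h_1+h_2+h_3$.
\end{itemize}
In either case the stated max-inequality ensures the real part cannot vanish, so $\psi(i\rho,\sigma;z)\notin\Omega$.

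The delicate part is the bookkeeping of the $z$-terms. The $-z(p-1)$ and $-(n-1)z$ terms become, after clearing denominators, expressions whose moduli must be dominated. Writing $z=x+iy$ and using $|z|<1$ is how the two squared brackets in the max arise. We first verify carefully that the coefficient of $\sigma$ matches $(C-D)(1+D)+(n+1)(1+D)^2$, since that identification drives the whole case split. Once this is done, Lemma \ref{lem2} gives $\Re q>0$, and hence the subordination.
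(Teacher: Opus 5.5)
Your framework is the paper's: the same auxiliary $q$ (your M\"obius relation is equivalent to the paper's $\frac{z\phi''}{\phi'}=\frac{(C-D)(q-1)}{(1+D)q+(1-D)}$), Lemma \ref{lem2} with $\Omega=\{0\}$, and a Riccati relation cleared by $A^2=\bigl((1+D)q+(1-D)\bigr)^2$. \textbf{The gap is in Steps 3--4: you misidentify where the hypotheses come from, so the sketch would not produce them.}

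\textbf{The coefficient of $\sigma$.} Since $zp'=\frac{2(C-D)\,zq'}{A^2}$, after clearing $A^2$ the coefficient of $s=zq'$ is the constant $2(C-D)>0$. There is no factor governed by $(C-D)(1+D)+(n+1)(1+D)^2$, and the inequality on $\sigma$ never flips. If it did, Lemma \ref{lem2} would give nothing, since only an upper bound on $\sigma$ is available; in any case that quantity is $\ge 0$ whenever $-1\le D<C\le 1$.

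\textbf{The role of $h_1,h_2,h_3$.} They are not coefficients of a quadratic in $t=\rho^2$, and an ``endpoint $t=1$'' makes no sense for $t\in[0,\infty)$. In the paper one writes $\psi=F_1s+F_2r^2+F_3r+F_4$ and $z=x+iy$, and uses $\sigma\le-\frac{1+\rho^2}{2}$ to get
\[
\Re\psi(i\rho,\sigma;z)\le-\tfrac12\bigl[(G_1+2G_2)\rho^2-2G_3\rho+G_1-2G_4\bigr].
\]
This is a quadratic in $\rho$ itself. Here $G_1=2(C-D)$, while $G_2=\Re F_2$ and $G_4=\Re F_4$ are affine in $x$, and $G_3=\Re(iF_3)$ is affine in $y$.
\begin{itemize}
\item The case split is the requirement $G_1+2G_2>0$ for all $|x|<1$. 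This is affine in $x$ with slope $-2\bigl[(C-D)(1+D)+(n+1)(1+D)^2\bigr]$, so one checks $x=1$ or $x=-1$.
\item Negativity for every real $\rho$ is then the discriminant condition $G_3^2<(G_1+2G_2)(G_1-2G_4)$.
\item The right side equals $4H(x)$ with $H(x)=h_1x^2+h_2x+h_3$. It is minimized over $|x|<1$, either at the vertex $-h_2/(2h_1)$ or at the endpoint $x=1$.
\item The left side is maximized over $|y|<1$, which gives the two squared brackets.
\end{itemize}
This discriminant step in $\rho$, together with the separation into $x$- and $y$-dependence, is the idea your outline is missing.

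\textbf{Sign slip in the ODE.} Differentiating \eqref{eq1} gives $zf'''+(\alpha+2-z)f''+(n-1)f'=0$, not $-(n-1)f'$. So the Riccati relation is
\[
zp'+(p-1)^2+(\alpha+1-z)(p-1)+(n-1)z=0 .
\]
This is consistent with $p'(0)=-\frac{n-1}{\alpha+2}$, which follows from $M_{n,\alpha}'=-\frac{n}{1+\alpha}M_{n-1,\alpha+1}$; your sign would give $p'(0)=+\frac{n-1}{\alpha+2}$.

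With either sign, the constant multiplying $zA^2$ is $\pm(n-1)$. So your promised check that the coefficients ``match'' $(n+1)(1+D)^2$ cannot succeed. The $(n+1)$ in the statement comes from the paper's proof writing the equation as $z\phi''+(\alpha+1-z)\phi'-n\phi=0$, which differentiates to $z\phi'''+(\alpha+2-z)\phi''-(n+1)\phi'=0$. Redone with the correct equation, the same computation yields conditions of the same shape but with $n+1$ replaced by $1-n$, not the ones stated.
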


\begin{proof}

Define $q: \mathbb{D} \rightarrow \mathbb{C}$
$$q(z)=\frac{(C-D)\phi'(z)+(1-D)z\phi''(z)}
{(C-D)\phi'(z)-(1+D)z\phi''(z)},$$ where $\phi(z)=M_{n,\alpha}(z).$

Then

$$q(z)\Big((C-D)\phi'(z)-(1+D)z\phi''(z)\Big)\ =(C-D)\phi'(z)+(1-D)z\phi''(z),$$

which gives
\begin{align*}
(C-D)\phi'(z)(q-1)
&=z\phi''(z)\Big((q+1)+D(q-1)\Big).
\end{align*}

Thus
$$\frac{z\phi''(z)}{\phi'(z)}
=\frac{(C-D)(q(z)-1)}
{(q(z)+1)+D(q(z)-1)}.$$

\medskip

Taking logarithmic derivative,
\begin{align}
\frac{d}{dz}\log\left(\frac{z\phi''}{\phi'}\right)
=\frac{d}{dz}\log\left(\frac{(C-D)(q-1)}
{(q+1)+D(q-1)}\right).
\end{align}

Left hand side becomes
$$\frac{1}{z}+\frac{\phi'''(z)}{\phi''(z)}
-\frac{\phi''(z)}{\phi'(z)}.$$

Right hand side becomes
$$\frac{q'(z)}{q(z)-1}
-\frac{(D+1)q'(z)}
{(q(z)+1)+D(q(z)-1)}.$$

Thus
\begin{align}
\frac{1}{z}+\frac{\phi'''}{\phi''}
-\frac{\phi''}{\phi'}
=\frac{q'(z)}{q(z)-1}
-\frac{(D+1)q'(z)}{(q(z)+1)+D(q(z)-1)}.
\end{align}

Multiplying by $z$ both sides,
\begin{align}
1+\frac{z\phi'''}{\phi''}
-\frac{z\phi''}{\phi'}
=\frac{zq'(z)}{q(z)-1}
-\frac{(D+1)zq'(z)}{(q(z)+1)+D(q(z)-1)}.
\end{align}

Hence
$$
\frac{z\phi'''}{\phi''}
=\frac{2zq'(z)}{(q(z)-1)((q(z)+1)+D(q(z)-1))}\
-1+\frac{z\phi''}{\phi'}.$$

Multiplying,
$$
\frac{z\phi'''(z)}{\phi''(z)}
\frac{z\phi''(z)}{\phi'(z)}
=\frac{2(C-D)zq'(z)}
{((q(z)+1)+D(q(z)-1))^2}\
-\frac{(C-D)(q(z)-1)}
{(q(z)+1)+D(q(z)-1)}\
+\frac{(C-D)^2(q(z)-1)^2}
{((q(z)+1)+D(q(z)-1))^2}.$$

\medskip

Now consider differential equation,
$$z\phi''(z)+(\alpha+1-z)\phi'(z)-n\phi(z)=0.$$

Differentiating,
\begin{align*}
\phi''(z)+z\phi'''(z)-\phi'(z)
+(\alpha+1-z)\phi''(z)-n\phi'(z)=0.
\end{align*}

Thus
$$z\phi'''(z)+(\alpha+2-z)\phi''(z)
-(n+1)\phi'(z)=0.$$

Dividing by $\phi'(z)$,
$$z\frac{\phi'''(z)}{\phi'(z)}
+(\alpha+2-z)\frac{\phi''(z)}{\phi'(z)}
-(n+1)=0.$$

Multiplying by $z$,
\begin{align}
z^2\frac{\phi'''(z)}{\phi'(z)}
+(\alpha+2-z)z\frac{\phi''(z)}{\phi'(z)}
-(n+1)z=0.
\end{align}

Using substitution,
$$\left(\frac{z\phi'''(z)}{\phi''(z)}
\frac{z\phi''(z)}{\phi'(z)}\right) 
+(\alpha+2-z)\frac{z\phi''(z)}{\phi'(z)}
-(n+1)z=0.
$$

\begin{align*}
&=\frac{2(C-D)zq'(z)}{\big((q(z)+1)+D(q(z)-1)\big)^2}
-\frac{(C-D)(q(z)-1)}{(q(z)+1)+D(q(z)-1)}  +\frac{(C-D)^2(q(z)-1)^2}{\big((q(z)+1)+D(q(z)-1)\big)^2} \\
& +(\alpha+2-z)\frac{(C-D)(q(z)-1)}{(q(z)+1)+D(q(z)-1)}  -(n+1)z \\
&=\frac{2(C-D)zq'(z)}{\big((q(z)+1)+D(q(z)-1)\big)^2}
+\frac{(C-D)^2(q(z)-1)^2}{\big((q(z)+1)+D(q(z)-1)\big)^2}  +\frac{(C-D)(q(z)-1)(\alpha+1-z)}{(q(z)+1)+D(q(z)-1)} \\
&\quad -(n+1)z =0.
\end{align*}

Let us simplify
\[
A=(q(z)+1)+D(q(z)-1)=(1+D)q+(1-D).
\]

Multiplying throughout by $A^2$, we obtain
\begin{align}
&2(C-D)zq'
+(C-D)^2(q-1)^2 \nonumber\\
&\quad +(C-D)(q-1)(\alpha+1-z)A
-(n+1)zA^2=0.
\end{align}

Now expand each term.

\medskip

\[
(q-1)^2=q^2-2q+1.
\]

\medskip

\begin{align*}
(q-1)A
&=(q-1)\big((1+D)q+(1-D)\big) \\
&=(1+D)q^2-2Dq-(1-D).
\end{align*}

\medskip

\begin{align*}
A^2
&=((1+D)q+(1-D))^2 \\
&=(1+D)^2q^2+2(1-D^2)q+(1-D)^2.
\end{align*}

\medskip

Substituting these expansions and collecting like terms, we obtain
\begin{align}\label{eq2}
&2(C-D)zq'(z) \nonumber\\
&+\Big[(C-D)^2+(C-D)(1+D)(\alpha+1-z)-(n+1)z(1+D)^2\Big]q^2(z) \nonumber\\
&+\Big[-2(C-D)^2-2D(C-D)(\alpha+1-z)-2(n+1)z(1-D^2)\Big]q(z) \nonumber\\
&+\Big[(C-D)^2-(C-D)(1-D)(\alpha+1-z)-(n+1)z(1-D)^2\Big]=0.
\end{align}

Hence, the equation takes the form
\[
F_1 zq'(z) + F_2 q(z)^2 + F_3 q(z) + F_4 = 0,
\]
where
\begin{align*}
F_1 &= 2(C-D), \\
F_2 &= (C-D)^2+(C-D)(1+D)(\alpha+1-z)-(n+1)z(1+D)^2, \\
F_3 &= -2(C-D)^2-2D(C-D)(\alpha+1-z)-2(n+1)z(1-D^2), \\
F_4 &= (C-D)^2-(C-D)(1-D)(\alpha+1-z)-(n+1)z(1-D)^2.
\end{align*}

With the help of \eqref{eq2}, $\Delta=0$ imply that $\psi(q(z),zq^{\prime}(z);z)\in \Delta$. Assume that
$G_{1}=\Re(F_{1})=2(C-D)$, $G_{2}=$\\$$\Re(F_{2})=(C-D)^2+(C-D)(1+D)(\alpha+1-x)-(n+1)x(1+D)^2,$$
$$G_{3}=\Re(iF_{3})=-2(C-D)^2-2D(C-D)(\alpha+1+y)+2(n+1)y(1-D^2),$$
$$G_{4}=(C-D)^2-(C-D)(1-D)(\alpha+1-x)-(n+1)x(1-D)^2.$$
For $\sigma\leq -\frac{(1+\rho^{2})}{2}$, $\rho\in \mathbb{R}$.
\begin{align*}
&\Re\psi(i\rho,\sigma; z)=G_{1}\sigma+G_{2}(i\rho)^{2}+G_{3}\rho+G_{4}\\
& = G_{1}\sigma-G_{2}(\rho)^{2}+G_{3}\rho+G_{4}\\
&\leq G_{1}\left(\frac{-(1+\rho^{2})}{2}\right)-G_{2}\rho^{2}+G_{3}\rho+G_{4}\\
&\leq \frac{-G_{1}}{2}-\frac{G_{1}\rho^{2}}{2}-G_{2}\rho^{2}+G_{3}\rho+G_{4}\\
&=\frac{-G_{1}-G_{1}\rho^{2}-2G_{2}\rho^{2}+2G_{3}\rho+2G{4}}{2}\\
&=\frac{-1}{2}[(G_{1}+2G_{2})\rho^{2}-2G_{3}\rho+G_{1}-2G_{4}]=R(\rho).
\end{align*}
By given hypothesis from any of  \eqref{eq4} and \eqref{eq5} $(G_{1}+2G_{2})>0$, $R(\rho)$ has maximum value at $\rho=\frac{G_{3}}{G_{1}+2G_{2}}$. Now finding value of $R(\rho)$ at $\rho=\frac{G_{3}}{G_{1}+2G_{2}}$ , for all $ \rho, |x|,|y|<1$, we have
\begin{align*}
&R(\rho)=\frac{-1}{2}[(G_{1}+2G_{2})\left(\frac{G_{3}}{G_{1}+2G_{2}}\right)^{2}-2G_{3}\left(\frac{G_{3}}{G_{1}+2G{2}}\right)-2G_{4}+G_{1}]<0\\
&=\frac{-1}{2}\left[\frac{-(G_{3})^{2}}{G_{1}+2G_{2}}-2G_{4}+G_{1}\right]< 0\\
&=\frac{G_{3}^{2}}{G_{1}+2G_{2}}< G_{1}-2G_{4}\\
&=G_{3}^{2}< (G_{1}+2G_{2})(G_{1}-2G_{4}).
\end{align*}
For $|x|<1,|y|<1$ and $y^{2}<1-x^{2}$ for above inequality, we have to prove
\begin{align}\label{eq3}
&[-(C-D)^2 - D(C-D)(\alpha+1+y) + (n+1)y(1-D^2)\Big]^2\nonumber\\
&<
\Big[(C-D)+(C-D)^2+(C-D)(1+D)(\alpha+1-x)-(n+1)x(1+D)^2\Big]\nonumber\\
&\times \Big[(C-D)-(C-D)^2+(C-D)(1-D)(\alpha+1-x)+(n+1)x(1-D)^2\Big].
\end{align}
Now, Verification of the required inequality,

\begin{align*}
L(y)
&= -(C-D)^2 - D(C-D)(\alpha+1+y) + (n+1)y(1-D^2).
\end{align*}

Let $K=C-D$. Then
\begin{align*}
L(y)
&= -K^2 - DK(\alpha+1+y) + (n+1)y(1-D^2)\\
&= -\big[K^2 + DK(\alpha+1)\big]
+ y\big[(n+1)(1-D^2)-DK\big].
\end{align*}

Define
\[
A=K^2 + DK(\alpha+1), \quad
B=(n+1)(1-D^2)-DK.
\]

Thus,
\[
L(y) = -A + By.
\]

\medskip

 Determine the maximum value of left hand side .
Since $y^2<1-x^2 \le 1$, we have $y \in (-1,1)$.
Hence,
\[
\max_{|y|<1} L(y)^2 = \max\{(-A+B)^2,(A+B)^2\}.
\]

That is,
\begin{align*}
\max L(y)^2
&= \max \Big\{
\big[-K^2 - DK(\alpha+2) + (n+1)(1-D^2)\big]^2,\\
&\qquad\qquad
\big[K^2 + DK\alpha + (n+1)(1-D^2)\big]^2
\Big\}.
\end{align*}

\medskip

Now, we determine the minimum value of right hand side

Let
\begin{align*}
H(x)
&=\Big[(C-D)+(C-D)^2+(C-D)(1+D)(\alpha+1-x)-(n+1)x(1+D)^2\Big]\\
&\quad \times \Big[(C-D)-(C-D)^2+(C-D)(1-D)(\alpha+1-x)+(n+1)x(1-D)^2\Big].
\end{align*}

Writing $K=C-D$, we express
\[
H(x)=h_1 x^2 + h_2 x + h_3,
\]
where
\begin{align*}
h_1
&=\Big[-(C-D)(1+D)-(n+1)(1+D)^2\Big]
\Big[-(C-D)(1-D)+(n+1)(1-D)^2\Big],
\end{align*}

\begin{align*}
h_2
&=\Big[(C-D)+(C-D)^2+(C-D)(1+D)(\alpha+1)\Big]
\Big[-(C-D)(1-D)+(n+1)(1-D)^2\Big] \\
&\quad + \Big[-(C-D)(1+D)-(n+1)(1+D)^2\Big]
\Big[(C-D)-(C-D)^2+(C-D)(1-D)(\alpha+1)\Big],
\end{align*}

\begin{align*}
h_3
&=\Big[(C-D)+(C-D)^2+(C-D)(1+D)(\alpha+1)\Big]
\Big[(C-D)-(C-D)^2+(C-D)(1-D)(\alpha+1)\Big].
\end{align*}

By given condition $h_1>0$, then the minimum occurs at
\[
\nu =-\frac{h_2}{2h_1},
\] since $H'(x)=0$ at $-\frac{h_2}{2h_1}$ and $H^{\prime\prime}(x)=2>0.$ 
Therefore minimum of H(x) will be $h_3 - \dfrac{h_2^2}{4h_1}$ for $\nu  \in (-1,1)$. In this case $|\nu|\geq 1$ and $H^{\prime}(x)=2h_{1}x+h_{2}\leq 2h_{1}+h_{2}\leq 0$ equivalent to given condition, $H(x)$
is monotonic decreasing. Hence $H(x)\geq H(1)=h_{1}+h_{2}+h_{3}\geq 0$. Thus, 
\[
\min_{|x|<1} H(x) =
\begin{cases}
h_3 - \dfrac{h_2^2}{4h_1}, & \nu  \in (-1,1),\\[6pt]
h_1+h_2+h_3, & \text{otherwise}.
\end{cases}
\]

We obtain equivalent to given conditions of the theorem  ,
\begin{align*}
&\max \Big\{
\big[-(C-D)^2 - (C-D)K(\alpha+2) + (n+1)(1-D^2)\big]^2,\\
&\qquad
\big[(C-D)^2 + D(C-D)\alpha + (n+1)(1-D^2)\big]^2
\Big\}\\
&\le \min_{|x|<1} H(x).
\end{align*}
Which proves the required result. 

\end{proof}
\begin{corollary}
    Let us consider the relation, we have
    $$zM_{n,\alpha}(z)=-\frac{\alpha}{n+1}zM'_{n+1,\alpha-1}(z),$$
    and consequently $$z\frac{(zM_{n,\alpha}(z))'}{zM_{n,\alpha}(z)}=1+z\frac{M^{\prime\prime}_{n+1,\alpha-1}(z)}{M^{\prime}_{n+1,\alpha-1}(z)}$$

Simultaneously with the Theorem \ref{thm2} and substituting $n=n-1,\alpha=\alpha+1$, yields the following reuslt as $zM_{n,\alpha}(z)\in S^{*}[C,D]$ (Janowski starlike). 
\end{corollary}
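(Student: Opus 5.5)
The plan is to reduce the Janowski starlikeness of $zM_{n,\alpha}(z)$ to the Janowski convexity of a shifted Laguerre polynomial, and then quote Theorem \ref{thm2} for that shifted polynomial.

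\textbf{Step 1: the derivative identity.} First I would check the relation stated in the corollary. The identity $M_{n,\alpha}'(z)=-\frac{n}{1+\alpha}M_{n-1,\alpha+1}(z)$ from the introduction, applied with $n\mapsto n+1$ and $\alpha\mapsto\alpha-1$, gives
\[
M_{n+1,\alpha-1}'(z)=-\frac{n+1}{\alpha}M_{n,\alpha}(z).
\]
Hence $M_{n,\alpha}(z)=-\frac{\alpha}{n+1}M'_{n+1,\alpha-1}(z)$, and multiplying by $z$ gives the displayed relation. This is a direct coefficient check from the series $\sum_k \frac{(-1)^k n!}{(1+\alpha)_k(n-k)!}\frac{z^k}{k!}$, using $(\alpha)_{k+1}=\alpha(1+\alpha)_k$. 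It needs $\alpha\neq0$, and in fact $\alpha-1>-1$, i.e.\ $\alpha>0$, so that $M_{n+1,\alpha-1}$ is defined in the range of Theorem \ref{thm2}.

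\textbf{Step 2: logarithmic differentiation.} Write $F=M_{n+1,\alpha-1}$ and $G(z)=zM_{n,\alpha}(z)=c\,zF'(z)$ with $c=-\alpha/(n+1)$. Since $M_{n,\alpha}(0)=1$, we have $G\in\mathcal A$. Logarithmic differentiation of $G$ gives
\[
\frac{zG'(z)}{G(z)}=1+\frac{z(F')'(z)}{F'(z)}=1+z\frac{M''_{n+1,\alpha-1}(z)}{M'_{n+1,\alpha-1}(z)}.
\]
The constant $c$ cancels, and the quotient is well defined as soon as $F'$, equivalently $M_{n,\alpha}$, does not vanish in $\mathbb D$.

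\textbf{Step 3: apply Theorem \ref{thm2}.} I would apply Theorem \ref{thm2} to $F$, i.e.\ with the parameters $(n,\alpha)$ of that theorem replaced by $(n+1,\alpha-1)$. This substitutes $n+2$ for $n+1$ and $\alpha$ for $\alpha+1$ in $h_1,h_2,h_3$ and in the inequalities \eqref{eq4}, \eqref{eq5} and the max/min conditions. The nonvanishing hypotheses translate to $0\notin M_{n,\alpha}(\mathbb D)$ and $0\notin M_{n-1,\alpha+1}(\mathbb D)$, using $F''=c'M_{n-1,\alpha+1}$. Under these translated hypotheses Theorem \ref{thm2} gives $1+zF''/F'\prec\frac{1+Cz}{1-Dz}$ (the target function as written in that theorem). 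By Step 2 this is exactly the Janowski subordination for $zG'/G$, so $zM_{n,\alpha}\in S^*[C,D]$.

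\textbf{Main obstacle.} The only delicate point is bookkeeping of the index shift. The corollary's wording ``$n=n-1$, $\alpha=\alpha+1$'' describes the inverse substitution, i.e.\ it expresses the conditions on $M_{n+1,\alpha-1}$ back in terms of the original indices. I would state the shifted hypotheses explicitly to avoid ambiguity, and record the extra requirement $\alpha>0$ together with the nonvanishing of $M_{n,\alpha}$ and $M_{n-1,\alpha+1}$ on $\mathbb D$.
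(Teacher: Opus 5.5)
Your proposal is correct and is the same argument the paper gives: the relation $M_{n,\alpha}=-\frac{\alpha}{n+1}M'_{n+1,\alpha-1}$, the identity $\frac{z(zM_{n,\alpha})'}{zM_{n,\alpha}}=1+\frac{zM''_{n+1,\alpha-1}}{M'_{n+1,\alpha-1}}$, and Theorem~\ref{thm2} applied to $M_{n+1,\alpha-1}$. Your explicit index bookkeeping, the requirement $\alpha>0$, and the translated nonvanishing conditions on $M_{n,\alpha}$ and $M_{n-1,\alpha+1}$ are a more careful rendering of the paper's ``substituting $n=n-1$, $\alpha=\alpha+1$''. The one thing to tidy is your hedge about the target function: the $\frac{1+Cz}{1-Dz}$ in Theorem~\ref{thm2} is a misprint, since its proof ($\Re q>0$ with $q(0)=1$) actually yields $1+z\phi''/\phi'\prec\frac{1+Cz}{1+Dz}$, which is exactly what $S^{*}[C,D]$ requires.
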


\section{Conclusion}
This work establishes new geometric properties of the Laguerre polynomial through the framework of exponential subordination and Janowski function theory. Sufficient conditions have been obtained for Janowski starlikeness and Janowski convexity, extending and complementing earlier results on convexity, close-to-convexity, strong starlikeness, and strong convexity. The derived results provide a broader understanding of the geometric behavior of Laguerre polynomials and strengthen their connection with geometric function theory. The included examples and corollaries illustrate the applicability and validity of the main results. It is expected that these findings may motivate further investigations on other special functions and related subclasses defined via differential subordination techniques.

{}

\end{document}